\documentclass{article}

\usepackage{arxiv}
\usepackage{float}
\usepackage[utf8]{inputenc} 
\usepackage[T1]{fontenc}    
\usepackage{hyperref}       
\usepackage{url}            
\usepackage{booktabs}       
\usepackage{amsfonts}       
\usepackage{nicefrac}       
\usepackage{microtype}      
\usepackage{lipsum}
\usepackage{graphicx}
\graphicspath{ {./images/} }
\usepackage{amssymb}
\usepackage{amsmath}
\usepackage{lineno,hyperref}
\usepackage{latexsym}
\usepackage{amsthm}
\usepackage{amssymb}
\usepackage{amsmath,float,cancel}
\usepackage{ulem}
\usepackage{tikz}
\usepackage{tkz-graph}
\usetikzlibrary{decorations,arrows,shapes}
\usepackage{enumerate}
\usepackage{comment}

\newtheorem{theorem}{Theorem}[section]
\newtheorem{lemma}[theorem]{Lemma}
\newtheorem{conjecture}[theorem]{Conjecture}
\newtheorem{remark}{Remark}
\newtheorem{proposition}[theorem]{Proposition}

\newtheorem{corollary}[theorem]{Corollary}

\newcommand{\chila}{\chi_{\mathrm{la}}}

\newcommand{\Cat}{\operatorname{Cat}}

\title{Counterexamples to the local antimagic leaf conjecture}

\author{
 André Ebling Brondani \\
  Universidade Federal Fluminense - UFF\\
  Volta Redonda, RJ\\
  \texttt{andrebrondani@id.uff.br} \\
   \And
 Francisca Andrea Macedo França \\
  Universidade Federal Fluminense - UFF\\
  Volta Redonda, RJ\\
  \texttt{francisca\_franca@id.uff.br} \\
}

\begin{document}
\maketitle
\begin{abstract}
It has been conjectured that $\chila(T)\leq\ell(T)+2$ for every tree $T$ of order at least three, where $\chila(T)$ is the local antimagic chromatic number and $\ell(T)$ denotes the number of leaves. We disprove this bound and show that no absolute constant can replace $2$ in an inequality of this form. Our approach uses degree-based intervals containing all possible induced vertex weights under bijective edge labelings, together with a separation criterion that forces selected internal vertex weights to be distinct. We first construct caterpillars $T_3$ and $T_4$ satisfying $\chila(T_3)=\ell(T_3)+3$ and $\chila(T_4)=\ell(T_4)+4$, respectively. We then construct, for every positive integer $r$, infinitely many pairwise nonisomorphic caterpillars with exactly $r$ internal vertices for which every bijective edge labeling is antimagic. Each such caterpillar $T$ satisfies $\chila(T)=|V(T)|=\ell(T)+r$. Consequently, $\chila(T)-\ell(T)$ is unbounded even within the class of caterpillars, and no absolute constant $C$ can satisfy $\chila(T)\leq\ell(T)+C$ for every tree $T$.
\end{abstract}

\keywords{Local antimagic chromatic number \and Local antimagic labeling \and Leaf conjecture \and Antimagic labeling}

\section{Introduction}\label{sec:introduction}

Let $G=(V(G),E(G))$ be a finite simple graph with $m$ edges. According to Hartsfield and Ringel~\cite{HartsfieldRingel1990}, a bijection $f:E(G)\to\{1,2,\ldots,m\}$ is called an \emph{antimagic labeling} of $G$ when the induced vertex weights $f^{+}(v)$, obtained by summing the labels of the edges incident with $v$, are pairwise distinct over all vertices of $G$. Hartsfield and Ringel conjectured that every connected graph of order at least three is antimagic. A weaker version of this concept is the \emph{local antimagic labeling}, which requires distinct induced weights only at adjacent vertices. These labelings were introduced independently by Arumugam et al.~\cite{Arumugam2017} and Bensmail, Senhaji and Lyngsie~\cite{BSL2017}. More precisely, a bijection $f:E(G)\to\{1,2,\ldots,m\}$ is a \emph{local antimagic labeling} of $G$ if $f^{+}(u)\neq f^{+}(v)$ whenever $uv\in E(G)$. Such a labeling induces a proper vertex coloring of $G$, and the minimum number of distinct induced vertex weights over all local antimagic labelings of $G$ is called the \emph{local antimagic chromatic number}, denoted by $\chila(G)$. Haslegrave~\cite{Has2018} subsequently proved that every connected graph other than $K_2$ admits a local antimagic labeling. Since then, exact values and constructive bounds for $\chila(G)$ have been obtained for several graph classes; see, for instance,~\cite{LNS2020,NSW23,Ventura2024}.

Although every antimagic labeling is local antimagic, the two notions lead to different problems. The classical antimagic problem asks for the existence of a labeling that distinguishes all vertices, whereas the local antimagic chromatic problem asks how few distinct induced vertex weights can occur while adjacent vertices remain distinguished.
In particular, the existence of an antimagic labeling does not imply that $\chila(G)=|V(G)|$, since another local antimagic labeling may assign the same induced weight to nonadjacent vertices. This distinction will be important in the results developed here.

For a tree $T$ of order at least three, let $\ell(T)$ denote its number of leaves. Arumugam et al.~\cite{Arumugam2017} established the fundamental lower bound $\chila(T)\geq\ell(T)+1$. It has been conjectured that $\chila(T)\leq\ell(T)+2$, which, together with the leaf lower bound, would restrict the parameter to the two values $\ell(T)+1$ and $\ell(T)+2$; see~\cite{Baca22,PACT2020}.
Throughout this paper, we refer to this conjecture as the \emph{leaf conjecture}.

A number of results support this prediction for restricted families of
trees. Arumugam et al.~\cite{Arumugam2017} determined the parameter
for paths and stars, both of which attain the leaf lower bound.
Premalatha et al.~\cite{PACT2020} studied several further classes of
trees and obtained values consistent with the two possibilities
predicted by the conjecture. Ba\v{c}a et al.~\cite{Baca22} verified
the conjectured upper bound for complete full $t$-ary trees and, in
particular, proved that the lower bound is attained when $t$ is odd.
Lau, Shiu and Soo~\cite{LauShiuSoo} investigated spider graphs and
obtained bounds and exact values for several subclasses. Thus the
values $\ell(T)+1$ and $\ell(T)+2$ arise naturally in structurally
different families of trees, which helps explain the plausibility of
the conjecture in light of the known positive cases.

The purpose of the present paper is to show that this behavior is not
universal, even within the class of caterpillars. Our approach is based
on elementary degree bounds for induced vertex weights. If a vertex
has degree $d$ in a graph with $m$ edges, then under any bijective edge
labeling its induced weight lies between $d(d+1)/2$ and
$d(2m-d+1)/2$. These weight intervals provide a mechanism for
separating induced weights at nonadjacent vertices, while adjacency
itself guarantees distinct weights under a local antimagic labeling.
Combining these observations yields a separation criterion that forces
prescribed internal vertices of a tree to have pairwise distinct
induced weights, all larger than every possible leaf weight.

We first apply this criterion to two explicit caterpillars. The first, $T_3$, has $40$ leaves and satisfies $\chila(T_3)=43=\ell(T_3)+3$, thereby disproving the leaf conjecture. The second, $T_4$, has $240$ leaves and satisfies $\chila(T_4)=244=\ell(T_4)+4$, showing that replacing the constant $2$ by $3$ still does not yield a universal upper bound. We then give an explicit construction showing that, for every positive integer $r$, there exist infinitely many pairwise nonisomorphic caterpillars with exactly $r$ internal vertices for which every bijective edge labeling is antimagic. In particular, every local antimagic labeling of such a caterpillar induces pairwise distinct vertex weights. Consequently, $\chila(T)=|V(T)|=\ell(T)+r$. Thus the construction also provides, for every prescribed number of internal vertices, infinitely many trees whose local antimagic chromatic number attains the largest possible value. Moreover, $\chila(T)-\ell(T)$ is unbounded even within the class of caterpillars, and hence there is no absolute constant $C$ such that $\chila(T)\leq\ell(T)+C$ for every tree $T$.

This labeling-independent property also connects the main construction
to the classical notion of antimagic labeling introduced by Hartsfield
and Ringel. Although $\chila(G)$ is defined through the weaker local
condition, the degree structure of the caterpillars constructed here
forces all induced vertex weights to be distinct under every bijective
edge labeling. Thus, for this family, the largest possible value
$\chila(T)=|V(T)|$ is not a consequence of a specially chosen labeling,
but of a structural obstruction that prevents any two vertices from
receiving the same induced weight.

These results reinforce the importance of determining the exact local antimagic chromatic number for specific classes of trees. Although no universal upper bound of the form $\ell(T)+C$ holds with an absolute constant $C$, the bounds established for
particular families remain valid. This motivates the search for structural conditions ensuring $\chila(T)\leq\ell(T)+2$ and, more generally, for classes in which
$\chila(T)-\ell(T)$ is uniformly bounded.

The remainder of the paper is organized as follows.
Section~\ref{sec:preliminaries} fixes the notation and recalls the leaf lower bound and the leaf conjecture. Section~\ref{sec:degree-bounds} establishes degree bounds for induced vertex weights and derives the separation criterion. Section~\ref{sec:counterexamples} contains the two explicit counterexamples. In Section~\ref{sec:unbounded}, we construct the general family and prove that $\chila(T)-\ell(T)$ is unbounded even within the class of caterpillars. Finally, Section~\ref{sec:conclusion} discusses consequences and further questions.

\section{Preliminaries}\label{sec:preliminaries}

We use standard graph-theoretic notation. All graphs considered in this paper are finite, simple and connected, with at least three vertices. Let $G=(V(G),E(G))$ be a graph with size $m=|E(G)|$, and write $[m]=\{1,2,\ldots,m\}$. For a vertex $v\in V(G)$, let $E_G(v)$ denote the set of edges incident with $v$, and let $d_G(v)=|E_G(v)|$ be its degree. The subscript is omitted whenever the graph is clear from the context. For a tree $T$, a vertex of degree one is called a \emph{leaf}, and every other vertex is called an \emph{internal vertex}. We denote by
$\ell(T)$ the number of leaves of $T$. A \emph{caterpillar} is a tree whose internal vertices induce a path, possibly a trivial one.

Given a bijection $f:E(G) \to [m]$, the \emph{induced weight} of vertex $v\in V(G)$ is defined by 
\begin{equation*}
f^{+}(v)=\sum_{e\in E_G(v)}f(e).    
\end{equation*}

Following Hartsfield and Ringel~\cite{HartsfieldRingel1990}, the labeling $f$ is called \emph{antimagic} if the induced vertex weights are pairwise distinct, that is, $f^{+}(u) \neq f^{+}(v)$ for every pair of distinct vertices $u,v\in V(G)$. A graph is called
\emph{antimagic} if it admits an antimagic labeling. The local version, introduced by Arumugam et al.~\cite{Arumugam2017} and Bensmail, Senhaji and Lyngsie~\cite{BSL2017}, requires only that $f^{+}(u)\neq f^{+}(v)$ whenever $uv\in E(G)$; in this case, $f$ is called a \emph{local antimagic labeling} of $G$.

For any bijection $f:E(G)\to[m]$, we write
\begin{equation*}
C(f)=\{f^{+}(v):v\in V(G)\}
\end{equation*}
for the set of distinct induced vertex weights of $f$. The local antimagic chromatic number of $G$ is
\begin{equation*}
\chila(G)=\min_f |C(f)|,
\end{equation*}
where the minimum is taken over all local antimagic labelings $f$ of $G$. Haslegrave~\cite{Has2018} proved that every connected graph of order at least three admits a local antimagic labeling. Hence $\chila(G)$ is
well defined for all graphs considered in this paper.

We recall the standard lower bound established in~\cite{Arumugam2017}.

\begin{theorem}[\cite{Arumugam2017}]\label{thm:leaf-lower-bound}
Let $T$ be a tree of order at least three with $\ell(T)$ leaves. Then
\begin{equation*}
\chila(T)\geq\ell(T)+1.
\end{equation*}
\end{theorem}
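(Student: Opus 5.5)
We will show that under any local antimagic labeling $f$ of $T$, the $\ell(T)$ leaves receive pairwise distinct weights, and that at least one further weight is used by an internal vertex that differs from every leaf weight. Together these give $|C(f)|\geq \ell(T)+1$. Taking the minimum over all local antimagic labelings then yields the claim.

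\textbf{Step 1: leaves have distinct weights.} A leaf $v$ is incident with exactly one edge $e_v$, so $f^{+}(v)=f(e_v)$. Distinct leaves lie on distinct edges, since the only tree in which two leaves share an edge is $K_2$, and $T$ has order at least three. Because $f$ is a bijection, the values $f(e_v)$ are pairwise distinct. The leaf weights therefore occupy exactly $\ell(T)$ distinct values.

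\textbf{Step 2: one extra color.} Let $m=|E(T)|$. Since $T$ has order at least three, it has an internal vertex. We choose a specific one: let $e^{*}$ be the edge with $f(e^{*})=m$.

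If $e^{*}$ is incident with an internal vertex $u$, then
\begin{equation*}
f^{+}(u)\geq m+1>m\geq f^{+}(v)
\end{equation*}
for every leaf $v$, because $u$ has at least one other incident edge with a positive label. So $f^{+}(u)$ is a new weight.

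Both ends of $e^{*}$ cannot be leaves, since again that would make $T=K_2$. So $e^{*}$ always has at least one internal end $u$, and the case above always applies. Hence $C(f)$ contains the $\ell(T)$ leaf weights together with $f^{+}(u)$, which gives $|C(f)|\geq\ell(T)+1$.

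The only potential obstacle is guaranteeing that the extra color is genuinely distinct from all leaf weights. Choosing the vertex incident with the label $m$ handles this. The local antimagic property is not even needed for the bound; it only ensures that $f$ is admissible in the definition of $\chila(T)$.
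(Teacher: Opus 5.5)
Your proof is correct and is essentially the argument the paper itself uses. The theorem is only cited from Arumugam et al., but its content is reproved at the start of the proof of Proposition~\ref{prop:separation-criterion}: leaf weights are pairwise distinct edge labels in $[m]$, and the edge labeled $m$ has an internal endpoint whose weight exceeds $m$.
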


We shall refer to the following conjecture as the \emph{leaf conjecture}; see~\cite{Baca22,PACT2020}.

\begin{conjecture}\label{conj:leaf}
Let $T$ be a tree of order at least three with $\ell(T)$ leaves. Then
\begin{equation*}
\chila(T)\leq\ell(T)+2.
\end{equation*}
\end{conjecture}

Together with Theorem~\ref{thm:leaf-lower-bound}, Conjecture~\ref{conj:leaf} is equivalent to $\chila(T)\in\{\ell(T)+1,\ell(T)+2\}$.

For later use, we introduce the following notation for caterpillars. For $r\geq2$, let $\Cat(p_1,\ldots,p_r)$ denote the tree obtained from the path $P=(v_1,\ldots,v_r)$ by adjoining $p_i$ leaves to $v_i$. We assume $p_1,p_r\geq1$ and $p_i\geq0$ for $2\leq i\leq r-1$, so that $P$ is precisely the path induced by the internal vertices. If $d_i=d_T(v_i)$, then
\begin{equation*}
\ell(T)=\sum_{i=1}^r p_i,\qquad |V(T)|=\ell(T)+r,\qquad |E(T)|=\ell(T)+r-1,
\end{equation*}
and equivalently
\begin{equation}\label{eq:caterpillar-degrees}
|E(T)|=\sum_{i=1}^r d_i-r+1.
\end{equation}

\section{Degree bounds for induced vertex weights}\label{sec:degree-bounds}

For a graph with $m$ edges and an integer $1\leq d\leq m$, define
\begin{equation*}
L_m(d)=\frac{d(d+1)}{2}\qquad\text{and}\qquad U_m(d)=\frac{d(2m-d+1)}{2},
\end{equation*}
and write
\begin{equation*}
I_m(d)=[L_m(d),U_m(d)].
\end{equation*}

\begin{lemma}\label{lem:weight-bounds}
Let $G$ be a graph with $m$ edges and let $f:E(G)\to[m]$ be a bijection. If $d_G(v)=d$, then
\begin{equation*}
L_m(d)\leq f^{+}(v)\leq U_m(d).
\end{equation*}
\end{lemma}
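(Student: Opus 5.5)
The plan is to observe that $f^{+}(v)$ is the sum of $d$ pairwise distinct labels, and to compare it with the smallest and largest possible sums of $d$ distinct elements of $[m]$. Since $f$ is a bijection, the $d$ edges in $E_G(v)$ receive $d$ pairwise distinct values from $[m]$. Because $G$ is simple, these are $d$ distinct edges, so $d\leq m$ and the quantities $L_m(d)$ and $U_m(d)$ are defined. Write these labels in increasing order as $a_1<a_2<\cdots<a_d$, so that $f^{+}(v)=\sum_{j=1}^d a_j$.

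For the lower bound, we show by induction on $j$ that $a_j\geq j$. The base case $a_1\geq1$ holds since labels lie in $[m]$. If $a_{j-1}\geq j-1$, then strict increase of integers gives $a_j\geq a_{j-1}+1\geq j$. Summing over $j$ yields
\[
f^{+}(v)\geq\sum_{j=1}^d j=\frac{d(d+1)}{2}=L_m(d).
\]

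For the upper bound, we argue symmetrically from the top. Downward induction gives $a_{d-k}\leq m-k$ for $0\leq k\leq d-1$, starting from $a_d\leq m$. Hence
\[
f^{+}(v)\leq\sum_{k=0}^{d-1}(m-k)=dm-\frac{d(d-1)}{2}=\frac{d(2m-d+1)}{2}=U_m(d).
\]
Alternatively, one can apply the lower bound to the complementary labels $m+1-a_j$, which are also $d$ distinct elements of $[m]$.

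There is no real obstacle here. The only points to check are that distinctness of the labels comes from injectivity of $f$, and that the arithmetic identity for $U_m(d)$ is correct. Both bounds are attained, by the labels $\{1,\dots,d\}$ and $\{m-d+1,\dots,m\}$ respectively, though this is not needed.
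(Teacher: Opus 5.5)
Your proposal is correct and follows essentially the same route as the paper: order the $d$ distinct labels at $v$, bound the $i$-th smallest between $i$ and $m-d+i$, and sum. The paper simply states both termwise bounds at once, while you spell out the inductions and verify the closed form for $U_m(d)$.
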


\begin{proof}
Let $t_1<\cdots<t_d$ be the labels of the edges incident with $v$. Since they are distinct integers in $[m]$, we have $i\leq t_i\leq m-d+i$ for every $1\leq i\leq d$. Hence
\begin{equation*}
\sum_{i=1}^d i\leq f^{+}(v)=\sum_{i=1}^d t_i\leq\sum_{i=1}^d(m-d+i),
\end{equation*}
and the two extreme sums are precisely $L_m(d)$ and $U_m(d)$.
\end{proof}

Theorem~\ref{thm:leaf-lower-bound} guarantees that every local antimagic labeling of a tree produces at least one induced vertex weight in addition to the $\ell(T)$ distinct leaf weights. The degree bounds in Lemma~\ref{lem:weight-bounds} allow this lower bound to be strengthened when suitable internal vertices can be separated by their possible
weight intervals. Indeed, these intervals can separate selected
internal vertices from all leaves, while adjacency and disjoint induced-weight intervals provide two complementary mechanisms for distinguishing the selected vertices from one another. This leads to the following refinement of the lower bound of Arumugam et al.~\cite{Arumugam2017}.

\begin{proposition}\label{prop:separation-criterion}
Let $T$ be a tree with $m\geq2$ edges, and let $S$ be a set of
internal vertices. Suppose that
\begin{enumerate}[(i)]
\item[(i)] $L_m(d_T(v))>m$ for every $v\in S$;
\item[(ii)] $I_m(d_T(u))\cap I_m(d_T(v))=\varnothing$ for every pair
of distinct nonadjacent vertices $u,v\in S$.
\end{enumerate}
Then every local antimagic labeling $f$ of $T$ satisfies
\begin{equation}\label{eq:separation-lower-bound}
|C(f)|\geq\ell(T)+\max\{1,|S|\}.
\end{equation}
Consequently,
\[
\chila(T)\geq\ell(T)+\max\{1,|S|\}.
\]
\end{proposition}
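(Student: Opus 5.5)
The plan is to count the weights contributed by the leaves and by the vertices of $S$ separately, and to show that these two groups of weights are disjoint. Fix a local antimagic labeling $f$ of $T$.

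\textbf{Leaf weights.} Each leaf $x$ has weight $f^{+}(x)=f(e_x)$, where $e_x$ is its unique incident edge. Distinct leaves have distinct incident edges: the only way two leaves could share an edge is if $T=K_2$, which is excluded since $m\geq 2$. As $f$ is a bijection, the $\ell(T)$ leaf weights are pairwise distinct, and each lies in $[m]$.

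\textbf{Weights on $S$.} For $v\in S$, Lemma~\ref{lem:weight-bounds} together with condition (i) gives $f^{+}(v)\geq L_m(d_T(v))>m$. So no vertex of $S$ shares a weight with any leaf. Next, take distinct $u,v\in S$.
\begin{itemize}
\item If $u$ and $v$ are adjacent, then $f^{+}(u)\neq f^{+}(v)$ because $f$ is local antimagic.
\item If they are nonadjacent, then by Lemma~\ref{lem:weight-bounds} we have $f^{+}(u)\in I_m(d_T(u))$ and $f^{+}(v)\in I_m(d_T(v))$. These intervals are disjoint by (ii), so the weights differ.
\end{itemize}
Hence $S$ contributes $|S|$ weights that are pairwise distinct and all exceed $m$. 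Combined with the leaf weights, this gives $|C(f)|\geq \ell(T)+|S|$.

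\textbf{The case $S=\varnothing$.} Here the bound required is $\ell(T)+1$. It follows from Theorem~\ref{thm:leaf-lower-bound}, which applies since $T$ has order at least three. Alternatively, pick any internal vertex $w$: it exists because $m\geq2$, and it is adjacent to some leaf $x$. Its weight $f^{+}(w)$ is a sum of $d(w)\ge 2$ labels, one of which is $f(wx)=f^{+}(x)$. If $f^{+}(w)$ equalled a leaf weight, it could only be that of a leaf $y\neq x$ with $f^{+}(y)>f(wx)$; I would not pursue this route and would simply cite the theorem. Taking the maximum of the two bounds yields \eqref{eq:separation-lower-bound}. Minimizing over $f$ then gives the stated bound on $\chila(T)$.

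There is no real obstacle in this argument. The only points needing care are that leaf weights are distinct and bounded by $m$, and the degenerate case $S=\varnothing$, which is handled by Theorem~\ref{thm:leaf-lower-bound}.
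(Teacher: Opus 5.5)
Your proof is correct and follows the paper's argument essentially verbatim for the $\ell(T)+|S|$ bound: leaf weights are distinct and at most $m$, weights on $S$ exceed $m$ by (i), and pairs in $S$ are separated by adjacency or by (ii). The only difference is the $\ell(T)+1$ part, where citing Theorem~\ref{thm:leaf-lower-bound} is valid since $|C(f)|\geq\chila(T)$; the paper instead keeps the proposition self-contained by repairing the route you abandoned: rather than an arbitrary internal vertex, take the internal endpoint $w$ of the edge labeled $m$ (one exists because $m\geq2$), so $f^{+}(w)\geq m+1$ exceeds every leaf weight.
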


\begin{proof}
Let $f$ be a local antimagic labeling of $T$.
Since the induced weight of a leaf is exactly the label of its unique
incident edge, the $\ell(T)$ leaves have pairwise distinct induced
weights in $[m]$. Since $m\geq2$, the edge labeled $m$ has at least
one internal endpoint $w$. Thus $f^+(w)>m$, which gives
$|C(f)|\geq\ell(T)+1$.

For each $v\in S$, Lemma~\ref{lem:weight-bounds} and
condition~\textnormal{(i)} give
\[
f^{+}(v)\geq L_m(d_T(v))>m.
\]
Hence no vertex of $S$ has the same induced weight as a leaf.
Moreover, let $u,v\in S$ be distinct. If $uv\in E(T)$, then
$f^{+}(u)\neq f^{+}(v)$ because $f$ is local antimagic.
Otherwise, their induced weights belong to disjoint intervals
by condition~\textnormal{(ii)}. Therefore, the vertices of $S$
contribute $|S|$ pairwise distinct weights, all different from
the leaf weights, and
$|C(f)|\geq\ell(T)+|S|$.

Combining the two bounds proves~\eqref{eq:separation-lower-bound}. Taking the minimum over all
local antimagic labelings of $T$ yields the final assertion.
\end{proof}

\begin{corollary}\label{cor:maximum-weights}
Let $T$ be a tree with $m\geq2$ edges. Suppose that the set of all internal vertices satisfies the hypotheses of Proposition~\ref{prop:separation-criterion}. Then
$\chila(T)=|V(T)|$. If, in addition, $I_m(d_T(u))\cap I_m(d_T(v))=\varnothing$ for every pair of distinct internal vertices $u,v$, then every bijection $f:E(T)\to[m]$ is antimagic.
\end{corollary}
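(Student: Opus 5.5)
The plan is to derive both assertions directly from Proposition~\ref{prop:separation-criterion}, taking $S$ to be the set of all internal vertices of $T$.

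\emph{First assertion.} Let $r$ be the number of internal vertices. Since $m\geq2$, $T$ has at least three vertices, so $r\geq1$ and $\max\{1,|S|\}=r$. Proposition~\ref{prop:separation-criterion} then gives $\chila(T)\geq\ell(T)+r=|V(T)|$. For the reverse inequality, note that every local antimagic labeling $f$ satisfies $|C(f)|\leq|V(T)|$ trivially, because $C(f)$ is the image of $V(T)$. Such a labeling exists by Haslegrave's theorem, so $\chila(T)\leq|V(T)|$ and equality holds.

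\emph{Second assertion.} Let $f:E(T)\to[m]$ be an arbitrary bijection, not assumed local antimagic. I would check that all vertex weights are pairwise distinct, splitting into three cases according to the types of the two vertices.
\begin{enumerate}[(a)]
\item Two leaves: their weights are the labels of their unique incident edges. These edges are distinct, since otherwise the tree would be $K_2$, which is excluded by $m\geq2$. Hence the labels differ because $f$ is injective.
\item A leaf and an internal vertex $v$: the leaf weight lies in $[m]$, while $f^{+}(v)\geq L_m(d_T(v))>m$ by Lemma~\ref{lem:weight-bounds} and hypothesis~(i).
\item Two distinct internal vertices $u,v$: by Lemma~\ref{lem:weight-bounds} their weights lie in $I_m(d_T(u))$ and $I_m(d_T(v))$, which are disjoint by the additional hypothesis. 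This holds whether or not $u$ and $v$ are adjacent.
\end{enumerate}
So $f$ is antimagic.

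The main point of care is in case (c). In Proposition~\ref{prop:separation-criterion}, adjacent internal vertices were distinguished only by the local antimagic property, which an arbitrary bijection need not have. This is exactly why the strengthened disjointness hypothesis, covering adjacent pairs as well, is required for the second statement. Apart from this, the argument is a routine assembly of Lemma~\ref{lem:weight-bounds} and the leaf observation.
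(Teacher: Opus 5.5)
Your proposal is correct and follows the paper's proof essentially step for step. You take $S$ to be all internal vertices in Proposition~\ref{prop:separation-criterion} for the lower bound, and use the existence of a local antimagic labeling together with $|C(f)|\leq|V(T)|$ for the upper bound. For the second assertion you separate leaves (distinct labels at most $m$) from internal vertices (weights in pairwise disjoint intervals above $m$). Your explicit three-case split, including the observation that adjacent internal vertices now need the strengthened disjointness hypothesis, simply spells out what the paper compresses into one sentence.
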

\begin{proof}
If $S$ is the set of all internal vertices, Proposition~\ref{prop:separation-criterion} gives $\chila(T)\geq\ell(T)+|S|=|V(T)|$. Since a local antimagic labeling exists and can induce at most $|V(T)|$ distinct vertex weights, equality follows. Under the additional hypothesis, take an arbitrary bijection $f:E(T)\to[m]$. The internal weights lie above $m$ in pairwise disjoint intervals, whereas the leaf weights are pairwise distinct and at most $m$. Hence all induced vertex weights are pairwise distinct, so $f$ is antimagic.
\end{proof}

\section{Counterexamples to the leaf conjecture}\label{sec:counterexamples}

We first apply the separation criterion to two caterpillars. The first disproves the leaf conjecture, while the second shows that even the upper bound $\ell(T)+3$ does not hold for arbitrary trees.

Let $T_3=\Cat(8,7,25)$, whose internal vertices induce the path $P=(a,b,c)$. Then
\begin{equation*}
\ell(T_3)=40,\qquad |E(T_3)|=42,\qquad |V(T_3)|=43,
\end{equation*}
and $d(a)=d(b)=9$, whereas $d(c)=26$.
Figure~\ref{fig:T3} displays the tree $T_3$.

\begin{figure}[h!]
\centering
\begin{tikzpicture}[
    scale=1,
    hub/.style={circle,fill=black,inner sep=2.1pt},
    leaf/.style={circle,fill=black,inner sep=.85pt}
]
\coordinate (a) at (-4,0);
\coordinate (b) at (-1.2,0);
\coordinate (c) at (3.2,0);
\draw[thick] (a)--(b)--(c);
\foreach \i in {1,...,8}{
    \pgfmathsetmacro{\ang}{-165+(\i-1)*150/7}
    \path (a) ++(\ang:1.25) coordinate (xa\i);
    \draw (a)--(xa\i);
    \node[leaf] at (xa\i) {};
}
\foreach \i in {1,...,7}{
    \pgfmathsetmacro{\ang}{-165+(\i-1)*150/6}
    \path (b) ++(\ang:1.25) coordinate (xb\i);
    \draw (b)--(xb\i);
    \node[leaf] at (xb\i) {};
}
\foreach \i in {1,...,25}{
    \pgfmathsetmacro{\ang}{-165+(\i-1)*150/24}
    \path (c) ++(\ang:2.0) coordinate (xc\i);
    \draw (c)--(xc\i);
    \node[leaf] at (xc\i) {};
}
\node[hub,label=above:{$a$}] at (a) {};
\node[hub,label=above:{$b$}] at (b) {};
\node[hub,label=above:{$c$}] at (c) {};
\end{tikzpicture}
\caption{The caterpillar $T_3$ with $43$ vertices.}
\label{fig:T3}
\end{figure}

The caterpillar $T_3$ has three internal vertices, which is the minimum number possible for a counterexample to the leaf conjecture. Indeed, every tree with at most two internal vertices satisfies $\chila(T)\leq |V(T)|\leq \ell(T)+2$. The following theorem shows that the leaf conjecture already fails for a tree with the minimum possible number of internal vertices.

\begin{theorem}\label{thm:T3}
The caterpillar $T_3$ satisfies
\begin{equation*}
\chila(T_3)=43=\ell(T_3)+3.
\end{equation*}
\end{theorem}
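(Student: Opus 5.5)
We apply Corollary~\ref{cor:maximum-weights} with $S=\{a,b,c\}$, the full set of internal vertices of $T_3$. This yields $\chila(T_3)=|V(T_3)|=43$. Since $\ell(T_3)=40$, the value $43$ equals $\ell(T_3)+3$. So it suffices to check the two hypotheses of Proposition~\ref{prop:separation-criterion} for $m=|E(T_3)|=42$.

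For hypothesis (i), we compute $L_{42}(9)=45>42$ and $L_{42}(26)=351>42$. Hence every internal vertex has an induced weight exceeding every possible leaf weight, whatever the bijection.

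For hypothesis (ii), note that $ab$ and $bc$ are edges of $T_3$. So the only nonadjacent pair of distinct vertices in $S$ is $\{a,c\}$, and we need $I_{42}(9)\cap I_{42}(26)=\varnothing$. Here
\[
U_{42}(9)=\frac{9(84-9+1)}{2}=\frac{9\cdot 76}{2}=342,
\]
which is strictly less than $L_{42}(26)=351$, so the two intervals are disjoint. This is the only step that is not immediate, and it is exactly what the choice of $25$ leaves at $c$ was made to achieve: it pushes $d(c)$ high enough that even the largest possible weight at $a$ falls below the smallest possible weight at $c$. The intervals of $a$ and $b$ coincide, but this causes no problem because $a$ and $b$ are adjacent.

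Proposition~\ref{prop:separation-criterion} now gives $\chila(T_3)\geq \ell(T_3)+3=43$. For the reverse inequality, Haslegrave's theorem guarantees that a local antimagic labeling of $T_3$ exists. Any such labeling produces at most $|V(T_3)|=43$ distinct weights, so $\chila(T_3)\leq 43$. This upper bound is packaged in Corollary~\ref{cor:maximum-weights}, which gives equality.
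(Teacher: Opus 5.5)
Your proof is correct and takes the same route as the paper. You verify hypotheses (i) and (ii) of Proposition~\ref{prop:separation-criterion} for $S=\{a,b,c\}$ using $I_{42}(9)=[45,342]$ and $L_{42}(26)=351$, note that $\{a,c\}$ is the only nonadjacent pair, and conclude via Corollary~\ref{cor:maximum-weights}; your explicit appeal to Haslegrave's theorem for the upper bound just unpacks what that corollary already contains.
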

\begin{proof}
For $m=42$, the relevant induced-weight intervals are
\begin{equation*}
I_{42}(9)=[45,342],\qquad
I_{42}(26)=[351,767].
\end{equation*}
Both intervals lie above $42$, and they are disjoint. Since $a$ and $c$ are the only nonadjacent internal
vertices, the set $S=\{a,b,c\}$ satisfies the hypotheses of Proposition~\ref{prop:separation-criterion}. Therefore, Corollary~\ref{cor:maximum-weights} gives
\[
\chila(T_3)=|V(T_3)|=43=\ell(T_3)+3.
\]
\end{proof}


We next give a second counterexample showing that even the bound
$\ell(T)+3$ is not universal.

Let $T_4=\Cat(21,20,99,100)$, whose internal vertices induce the path $P=(a,b,c,d)$. Then
\begin{equation*}
\ell(T_4)=240,\qquad |E(T_4)|=243,\qquad |V(T_4)|=244,
\end{equation*}
with $d(a)=d(b)=22$ and $d(c)=d(d)=101$.

\begin{theorem}\label{thm:T4}
The caterpillar $T_4$ satisfies
\begin{equation*}
\chila(T_4)=244=\ell(T_4)+4.
\end{equation*}
\end{theorem}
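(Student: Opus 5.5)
The plan is to apply Corollary~\ref{cor:maximum-weights} with $S=\{a,b,c,d\}$, the full set of internal vertices, exactly as in the proof of Theorem~\ref{thm:T3}. So I only need to check the two hypotheses of Proposition~\ref{prop:separation-criterion} for $T_4$ with $m=243$.

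First I compute the two relevant weight intervals. For $d=22$,
\[
L_{243}(22)=\frac{22\cdot 23}{2}=253
\quad\text{and}\quad
U_{243}(22)=\frac{22(486-22+1)}{2}=11\cdot 465=5115.
\]
For $d=101$,
\[
L_{243}(101)=\frac{101\cdot 102}{2}=5151
\quad\text{and}\quad
U_{243}(101)=\frac{101\cdot 386}{2}=101\cdot 193=19493.
\]
Hence
\[
I_{243}(22)=[253,5115]
\quad\text{and}\quad
I_{243}(101)=[5151,19493].
\]

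Condition~(i) holds because both lower endpoints exceed $m=243$; the binding case is $253>243$.

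For condition~(ii), the internal path is $(a,b,c,d)$, so its nonadjacent pairs are $\{a,c\}$, $\{a,d\}$ and $\{b,d\}$. In each pair one vertex has degree $22$ and the other has degree $101$. Since $5115<5151$, the two intervals are disjoint, so every nonadjacent pair is covered. The adjacent pairs $\{a,b\}$ and $\{c,d\}$ have equal degrees, and they are exactly where adjacency must do the separating.

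Corollary~\ref{cor:maximum-weights} then gives $\chila(T_4)=|V(T_4)|=244=\ell(T_4)+4$. The only real obstacle is the arithmetic at the tight boundaries, namely $253>243$ and $5115<5151$, so I will double-check these values; everything else is a direct invocation of the earlier results.
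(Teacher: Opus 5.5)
Your proposal is correct and matches the paper's proof: it uses the same intervals $I_{243}(22)=[253,5115]$ and $I_{243}(101)=[5151,19493]$ and the same three nonadjacent pairs $\{a,c\},\{a,d\},\{b,d\}$. It then concludes via Proposition~\ref{prop:separation-criterion} and Corollary~\ref{cor:maximum-weights} with $S=\{a,b,c,d\}$, exactly as the paper does.
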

\begin{proof}
For $m=243$, the relevant induced-weight intervals are
\[
I_{243}(22)=[253,5115],\qquad
I_{243}(101)=[5151,19493].
\]
They lie above $243$ and are disjoint. Since the nonadjacent pairs of internal vertices are $(a,c)$, $(a,d)$, and $(b,d)$, the set $S=\{a,b,c,d\}$ satisfies the hypotheses of Proposition~\ref{prop:separation-criterion}. Therefore, Corollary~\ref{cor:maximum-weights} yields
\[
\chila(T_4)=|V(T_4)|=244=\ell(T_4)+4.
\]
\end{proof}

\begin{remark}\label{rem:minimal-internal-count}
The examples use the smallest possible numbers of internal vertices for their respective violations. Indeed, if a tree has $r$ internal vertices, then $\chila(T)\leq|V(T)|=\ell(T)+r$. Thus violating the bound $\ell(T)+2$ requires $r\geq3$, while violating $\ell(T)+3$ requires $r\geq4$. This observation does not assert that either $T_3$ or $T_4$ is of minimum order among such counterexamples.
\end{remark}

\section{Caterpillars with unbounded $\chila(T)-\ell(T)$}\label{sec:unbounded}

The preceding examples show that the bounds $\ell(T)+2$ and $\ell(T)+3$ are not universal upper bounds. We now prove a stronger statement: the difference $\chila(T)-\ell(T)$ is unbounded even among caterpillars. The construction is explicit and, for each prescribed number of internal vertices, produces infinitely many pairwise nonisomorphic examples.

\subsection{Degree-separated caterpillars}\label{subsec:degree-separated}

Let $T$ be a caterpillar whose internal vertices induce the path $P=(v_1,\ldots,v_r)$. Let $m=|E(T)|$ and write
$d_i=d_T(v_i)$ for $1\leq i\leq r$. We say that $T$ is
\emph{degree-separated} if $L_m(d_i)>m$ for every $1\leq i\leq r$ and $I_m(d_i)\cap I_m(d_j)=\varnothing$ whenever $|i-j|\geq2$. Thus, under every bijective edge labeling, every internal vertex has induced weight greater than every leaf weight. Moreover, the possible induced-weight intervals of any two nonadjacent internal vertices are disjoint, whereas adjacent internal vertices already have distinct
induced weights under a local antimagic labeling.

\begin{proposition}\label{prop:degree-separated-caterpillar}
Let $T$ be a degree-separated caterpillar with $r$ internal vertices. Then
\begin{equation*}
\chila(T)=|V(T)|=\ell(T)+r.
\end{equation*}
\end{proposition}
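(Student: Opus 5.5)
The plan is to reduce the statement directly to Corollary~\ref{cor:maximum-weights}, applied with $S=\{v_1,\ldots,v_r\}$, the set of all internal vertices of $T$. First we check that $m\geq2$. A caterpillar in the $\Cat(p_1,\ldots,p_r)$ sense with $r\geq2$ clearly has at least three edges. In the degenerate case $r=1$ (a star), the condition $L_m(d_1)>m$ with $d_1=m$ reads $m(m+1)/2>m$, which forces $m\geq2$. So the hypothesis $m\geq 2$ of Proposition~\ref{prop:separation-criterion} is satisfied in every case.

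Next we verify the two hypotheses of Proposition~\ref{prop:separation-criterion} for $S$. Condition~(i), $L_m(d_T(v_i))>m$ for every $i$, is exactly the first clause in the definition of degree-separated. For condition~(ii), two distinct internal vertices $v_i,v_j$ of a caterpillar are adjacent in $T$ if and only if they are consecutive on the spine $P$. The reason is that $T$ is a tree and the internal vertices induce the path $P$, so no other edge can join two internal vertices. Hence $v_i,v_j$ are nonadjacent precisely when $|i-j|\geq2$, and for such pairs the second clause of the definition gives $I_m(d_i)\cap I_m(d_j)=\varnothing$. So $S$ meets hypotheses~(i) and~(ii).

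Corollary~\ref{cor:maximum-weights} then yields $\chila(T)=|V(T)|$. The identity $|V(T)|=\ell(T)+r$ holds because every vertex is either a leaf or one of the $r$ internal vertices. If one prefers to avoid the corollary, the same conclusion follows from two inequalities. Proposition~\ref{prop:separation-criterion} gives $\chila(T)\geq\ell(T)+\max\{1,r\}=\ell(T)+r$. Conversely, some local antimagic labeling exists by Haslegrave's theorem, and any labeling induces at most $|V(T)|$ distinct weights, so $\chila(T)\leq |V(T)|$.

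The only step that needs real care is the identification of nonadjacent internal pairs with the pairs satisfying $|i-j|\geq 2$. Everything else is a direct invocation of earlier results. In particular, the stronger antimagic conclusion of the corollary is not needed here, since it would require disjointness of the intervals also for adjacent pairs.
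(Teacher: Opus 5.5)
Your proposal is correct and follows the paper's proof: take $S$ to be the set of all internal vertices, observe that the two clauses of degree-separatedness are exactly hypotheses (i) and (ii) of Proposition~\ref{prop:separation-criterion}, since nonadjacent internal vertices are those with $|i-j|\geq 2$, and then invoke Corollary~\ref{cor:maximum-weights}. Your explicit checks of $m\geq 2$ and of the adjacency identification just fill in details that the paper leaves implicit.
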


\begin{proof}
The set of all internal vertices satisfies the hypotheses of Proposition~\ref{prop:separation-criterion}. Hence Corollary~\ref{cor:maximum-weights} gives $\chila(T)=|V(T)|$. Since $T$ has $\ell(T)$ leaves and exactly $r$ internal vertices, $|V(T)|=\ell(T)+r$.
\end{proof}

\subsection{An explicit infinite family}\label{subsec:explicit-family}

We now construct infinitely many degree-separated caterpillars.
The construction is guided by the estimates $L_m(d)\geq d^2/2$ and $U_m(d)\leq md$. We seek internal degrees $d_1<\cdots<d_r$ such that $L_m(d_1)>m$ and $U_m(d_i)<L_m(d_{i+1})$ for $1\leq i<r$. These inequalities ensure that all internal vertex weights exceed the leaf weights and that the corresponding induced-weight intervals are pairwise disjoint. To obtain them, we choose a sequence whose sum can be controlled by its largest term and whose successive terms satisfy suitable quadratic identities.

Fix an integer $r\geq2$ and define
\begin{equation}\label{eq:base-degrees}
a_i=4^{\,2^r-2^{r-i}}\quad(1\leq i\leq r),\qquad A=a_r=4^{\,2^r-1}.
\end{equation}
These positive integers satisfy
\begin{equation}\label{eq:degree-identities}
a_1^2=4A,\qquad a_{i+1}^2=4Aa_i\quad(1\leq i<r).
\end{equation}
Moreover,
\begin{equation*}
\frac{a_{i+1}}{a_i}=4^{\,2^{r-i-1}}\geq4,
\end{equation*}
so $a_1<\cdots<a_r=A$ and
\begin{equation}\label{eq:degree-sum}
\sum_{i=1}^r a_i\leq A\sum_{j=0}^{r-1}4^{-j}<\frac{4A}{3}.
\end{equation}

For any integer $t\geq1$, put $d_i=ta_i$. Let $H_{r,t}$ be the caterpillar obtained from the path $P=(v_1,\ldots,v_r)$ by adjoining $d_1-1$ leaves to $v_1$, $d_r-1$ leaves to $v_r$, and $d_i-2$ leaves to $v_i$ for $2\leq i\leq r-1$. Since $d_i\geq a_1\geq16$, all attachment numbers are positive, and the internal vertices of $H_{r,t}$ are precisely $v_1,\ldots,v_r$. Hence they induce the path $P$. By~\eqref{eq:caterpillar-degrees} and~\eqref{eq:degree-sum}, if $m=|E(H_{r,t})|$, then
\begin{equation}\label{eq:family-size}
m=t\sum_{i=1}^r a_i-r+1<\frac{4tA}{3}.
\end{equation}
Furthermore,
\begin{equation}\label{eq:family-order-leaves}
|V(H_{r,t})|=t\sum_{i=1}^r a_i-r+2,\qquad \ell(H_{r,t})=t\sum_{i=1}^r a_i-2r+2.
\end{equation}

\begin{theorem}\label{thm:main}
For every integer $r\geq1$, there exist infinitely many pairwise nonisomorphic caterpillars $H$ with exactly $r$ internal vertices such that every bijection $f:E(H)\to\{1,\ldots,|E(H)|\}$ is antimagic. In particular,
\begin{equation*}
\chila(H)=|V(H)|=\ell(H)+r.
\end{equation*}
\end{theorem}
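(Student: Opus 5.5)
For $r\geq2$ I would take the caterpillars $H_{r,t}$, $t\geq1$, and prove a stronger statement: the intervals $I_m(d_1),\dots,I_m(d_r)$ lie entirely above $m$ and form a strictly increasing chain of pairwise disjoint intervals. Corollary~\ref{cor:maximum-weights} then shows that every bijection is antimagic and that $\chila(H_{r,t})=|V(H_{r,t})|$. The equality $|V|=\ell+r$ holds because the internal vertices are exactly $v_1,\dots,v_r$. The case $r=1$ is handled separately using stars.

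\textbf{Leaves versus internal vertices.} Let $m=|E(H_{r,t})|$. Since $L_m(d)\geq d^2/2$, I would use~\eqref{eq:degree-identities} to get
\[
L_m(d_1)\geq \tfrac{t^2a_1^2}{2}=2t^2A .
\]
By~\eqref{eq:family-size}, $m<4tA/3\leq 2t^2A$, so $L_m(d_1)>m$. Because $L_m$ is increasing in $d$ and $d_1<\dots<d_r$, this gives $L_m(d_i)>m$ for all $i$, which is hypothesis~(i).

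\textbf{Separating consecutive internal vertices.} For $1\leq i<r$ I would use $U_m(d)=d(2m-d+1)/2\leq dm$, valid since $d\geq1$. Combined with~\eqref{eq:family-size} this gives
\[
U_m(d_i)\leq m\,t a_i<\tfrac{4}{3}t^2Aa_i .
\]
On the other side, \eqref{eq:degree-identities} gives
\[
L_m(d_{i+1})>\tfrac{d_{i+1}^2}{2}=\tfrac{t^2\cdot 4Aa_i}{2}=2t^2Aa_i .
\]
Hence $U_m(d_i)<L_m(d_{i+1})$. Chaining these inequalities shows that all the $I_m(d_i)$ are pairwise disjoint, not just the nonadjacent pairs. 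This gives both hypothesis~(ii) and the additional hypothesis of Corollary~\ref{cor:maximum-weights}. I would also note that $d_r\leq m$, so the intervals are well defined.

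\textbf{Infinitely many examples and the case $r=1$.} By~\eqref{eq:family-order-leaves}, $|V(H_{r,t})|$ is strictly increasing in $t$. So the caterpillars $H_{r,t}$, $t\geq1$, are pairwise nonisomorphic, each with exactly $r$ internal vertices.

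For $r=1$ I would take the stars $K_{1,n}$ with $n\geq2$, which are pairwise nonisomorphic. The leaf weights are the distinct labels in $[n]$. The center has weight $n(n+1)/2>n$. So every bijection is antimagic, and $\chila=n+1=\ell+1$.

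The only real obstacle is the consecutive separation step. Everything hinges on the identities $a_{i+1}^2=4Aa_i$ and on the crude bound $m<4tA/3$; together these leave a safety factor of $3/2$. I would check that the strict inequalities survive the rounding in $L_m$ and $U_m$; they do, because both estimates are strict in the right direction.
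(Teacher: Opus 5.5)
Your proposal is correct and follows the paper's proof essentially step for step. You use the same family $H_{r,t}$, the same estimates $L_m(d)>d^2/2$ and $U_m(d)\leq dm$, the identities $a_1^2=4A$ and $a_{i+1}^2=4Aa_i$ with $m<4tA/3$, Corollary~\ref{cor:maximum-weights}, monotonicity of the order in $t$, and stars for $r=1$. Your remark that $d_r\leq m$, so the intervals are well defined, is a small point the paper leaves implicit.
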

\begin{proof}
For $r=1$, take $H_{1,t}=K_{1,t+1}$ for $t\geq1$. Under any bijective edge labeling, the leaf weights are the distinct integers $1,\ldots,t+1$, whereas the center has induced weight $(t+1)(t+2)/2>t+1$. Hence every bijection is antimagic, and varying $t$ gives infinitely many pairwise nonisomorphic stars.

Now let $r\geq2$ and consider $H_{r,t}$ defined above. We first show that the induced weight of every internal vertex exceeds $m$. By~\eqref{eq:degree-identities} and~\eqref{eq:family-size},
\begin{equation}\label{eq:family-above-leaves}
L_m(d_1)>\frac{d_1^2}{2}=\frac{t^2a_1^2}{2}=2t^2A\geq2tA>\frac{4tA}{3}>m.
\end{equation}
Since $L_m(d)$ is increasing in $d$, it follows that $L_m(d_i)>m$ for every $1\leq i\leq r$.

Next, for $1\leq i<r$, we have $U_m(d_i)\leq d_i m$. Therefore, using~\eqref{eq:degree-identities} and~\eqref{eq:family-size},
\begin{equation*}
U_m(d_i)<\frac{4t^2Aa_i}{3}=\frac{t^2a_{i+1}^2}{3}<\frac{t^2a_{i+1}^2}{2}<L_m(d_{i+1}).
\end{equation*}
Hence the intervals $I_m(d_1),\ldots,I_m(d_r)$ are pairwise disjoint. Together with \eqref{eq:family-above-leaves}, Corollary~\ref{cor:maximum-weights} shows that every bijection $f:E(H_{r,t})\to[m]$ is antimagic. Consequently,
\begin{equation*}
\chila(H_{r,t})=|V(H_{r,t})|=\ell(H_{r,t})+r.
\end{equation*}
Finally, for fixed $r$, the order in \eqref{eq:family-order-leaves} is strictly increasing with $t$, so the trees $H_{r,t}$ are pairwise nonisomorphic.
\end{proof}

\begin{remark}
Since $\chila(G)\leq |V(G)|$ for every graph $G$ considered here,
the caterpillars $H_{r,t}$ have maximum possible local antimagic
chromatic number. In particular, for every positive integer $r$,
there exist infinitely many pairwise nonisomorphic trees with exactly
$r$ internal vertices satisfying $\chila(T)=|V(T)|$. Thus trees with
maximum local antimagic chromatic number may have arbitrarily many
internal vertices.
\end{remark}

\begin{corollary}\label{cor:no-constant}
There is no absolute constant $C$ such that
\begin{equation*}
\chila(T)\leq\ell(T)+C
\end{equation*}
for every tree $T$ of order at least three.
\end{corollary}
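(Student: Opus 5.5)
The corollary follows from Theorem~\ref{thm:main} by contradiction. Suppose some absolute constant $C$ satisfied $\chila(T)\leq\ell(T)+C$ for every tree $T$ of order at least three. Fix any integer $r>C$, for instance $r=\max\{1,\lfloor C\rfloor+1\}$. Theorem~\ref{thm:main} supplies a caterpillar $H$ with exactly $r$ internal vertices and $\chila(H)=|V(H)|=\ell(H)+r$. For $r\geq2$ we may take $H=H_{r,1}$, and for $r=1$ a star $K_{1,t+1}$.

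Next we check that $H$ is an admissible tree. It has order at least three: for $r\geq2$ it has at least $r$ internal vertices plus leaves attached at both ends of the path, and for $r=1$ it is a star with at least two leaves. Then $\chila(H)=\ell(H)+r>\ell(H)+C$, which contradicts the assumed bound. Hence no such $C$ exists.

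Equivalently, $\chila(T)-\ell(T)$ takes every positive integer value $r$ on the family $\{H_{r,t}\}$, so it is unbounded. This matches the statement in the abstract that unboundedness already holds within caterpillars.

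There is no real obstacle here, since all the work is done in Theorem~\ref{thm:main}. The only points to watch are that $r$ must be a positive integer exceeding $C$ (so $C$ need not be an integer, and a negative $C$ is trivially handled), and that the witness tree has order at least three, as checked above.
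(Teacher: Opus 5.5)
Your proof is correct and is the paper's argument: choose a positive integer $r>C$ and invoke Theorem~\ref{thm:main} to obtain caterpillars with $\chila(T)-\ell(T)=r>C$. Your extra checks are harmless details that the paper leaves implicit: that the witness tree has order at least three, and that a non-integer or negative $C$ is handled by taking $r=\max\{1,\lfloor C\rfloor+1\}$.
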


\begin{proof}
Given $C$, choose a positive integer $r>C$ and apply Theorem~\ref{thm:main}. The resulting caterpillars satisfy 
\begin{equation*}
\chila(T)-\ell(T)=r>C.    
\end{equation*}
\end{proof}

\begin{remark}\label{rem:construction-size}
The base sequence $(a_1,a_2,\ldots,a_r)$ in~\eqref{eq:base-degrees} is chosen
to yield a uniform explicit construction rather than small examples. For $t=1$, the first two instances with at least two internal vertices are $H_{2,1}=\Cat(15,63)$ and $H_{3,1}=\Cat(255,4094,16383)$. In particular, $H_{3,1}$ has $20735$ vertices, whereas $T_3$ has only $43$. The smaller counterexamples exploit adjacency to allow equal degrees at neighboring internal vertices, while the general construction separates the induced-weight intervals determined by the degrees of all internal vertices.
\end{remark}

\section{Concluding remarks}\label{sec:conclusion}

For every positive integer $r$, we constructed infinitely many pairwise nonisomorphic caterpillars with exactly $r$ internal
vertices satisfying
\[
\chila(T)=|V(T)|=\ell(T)+r.
\]
Consequently, there is no absolute constant $C$ such that $\chila(T)\leq\ell(T)+C$ for every tree $T$, even within the class of caterpillars. This disproves the leaf conjecture and shows that its failure
cannot be remedied by replacing $2$ by any absolute constant.

The family constructed in Section~\ref{sec:unbounded} also connects this extremal behavior to classical antimagic labeling. The induced-weight intervals determined by the vertex degrees force the internal vertex weights to be pairwise distinct and greater than every leaf weight. Since the weight of each leaf is precisely the label of its unique incident edge, the leaf weights are pairwise distinct under every bijective edge labeling. Hence every bijective edge labeling of these caterpillars is antimagic. Thus the degree structure alone forces the local antimagic chromatic number to attain its largest possible value.

The positive results known for restricted families of trees motivate the search for structural conditions under which $\chila(T)\leq\ell(T)+2$ still holds or, more generally, under which $\chila(T)-\ell(T)$ remains uniformly bounded. Further questions include the characterization of trees satisfying $\chila(T)=|V(T)|$ and of those for which every bijective edge labeling is antimagic. It is also natural to determine, for each positive integer $r$, the minimum order of a tree satisfying $\chila(T)-\ell(T)=r$, both among all trees and within the class of caterpillars.

\bibliographystyle{plain} 
\bibliography{refs2}

\end{document}